\newtheorem{theorem}{Theorem}[section]
\newtheorem{proposition}[theorem]{Proposition}
\newtheorem{lemma}[theorem]{Lemma}
\newtheorem{corollary}[theorem]{Corollary}
\theoremstyle{remark}
\numberwithin{equation}{section}
\begin{document}

\title[Zero Lie product determined Banach algebras]{Zero Lie product determined Banach algebras}

\author{J. Alaminos}
\author{M. Bre\v sar}
\author{J. Extremera}
\author{A.\,R. Villena}
\address{J. Alaminos, J. Extremera  and A.\,R. Villena, Departamento de An\' alisis
Matem\' atico, Fa\-cul\-tad de Ciencias, Universidad de Granada,
18071 Granada, Spain} \email{alaminos@ugr.es, jlizana@ugr.es,
avillena@ugr.es}
\address{M. Bre\v sar,  Faculty of Mathematics and Physics,  University of Ljubljana,
Jadranska 19, 1000 Ljubljana,
 and Faculty of Natural Sciences and Mathematics, University
of Maribor, Koro\v ska 160, 2000 Maribor, Slovenia} \email{matej.bresar@fmf.uni-lj.si}

\thanks{\emph{Mathematics Subject Classification}. 43A20,  46H05, 46L05,  47B48.}
\keywords{Zero Lie product determined Banach algebra, property $\mathbb{B}$,
 amenability, group algebra, C*-algebra.}
\thanks{
The authors were supported by MINECO grant MTM2015--65020--P.
The first, the third  and the fourth named authors were supported
by  Junta de Andaluc\'{\i}a grant FQM--185.
The second  named author was supported by ARRS grant P1--0288.}

\begin{abstract}
A Banach algebra $A$ is said to be zero Lie product determined if
every continuous bilinear functional
$\varphi \colon A\times A\to \mathbb{C}$ with the property that
$\varphi(a,b)=0$ whenever $a$ and $b$
commute is of the form $\varphi(a,b)=\tau(ab-ba)$ for some $\tau\in A^*$. In the first part of the paper we give some general remarks on this class of algebras. In the second part we consider amenable Banach algebras and show that
 all group algebras
$L^1(G)$ with $G$ an amenable locally compact group are  zero Lie product determined.
\end{abstract}

\maketitle

\section{Introduction}

Let $A$ be a Banach algebra and let $\varphi \colon A\times A\to\mathbb{C}$ be a continuous bilinear functional satisfying
\begin{equation}\label{B}
a,b\in A, \ [a,b]=0 \ \ \Longrightarrow \ \ \varphi(a,b)=0
\end{equation}
(here and subsequently, $[a,b]$ stands for the commutator $ab - ba$).
This is certainly fulfilled if $\varphi$ is of the form
\begin{equation}\label{Bl}
\varphi(a,b)=\tau([a,b])  \quad (a,b\in A)
\end{equation}
for some $\tau$ in $A^*$, the dual of $A$. We will say that $A$ is a
\emph{zero Lie product determined Banach algebra} if, for every continuous bilinear functional
$\varphi \colon A\times A\to\mathbb{C}$ satisfying~\eqref{B}, there exists $\tau\in A^*$ such that~\eqref{Bl} holds.
This is an analytic analogue of the purely algebraic notion of a zero Lie product determined algebra, first indirectly  considered in~\cite{BrSe}
and, slightly later, more systematically
 in~\cite{BGS} (see also   subsequent papers~\cite{Gr,WCZ}).
Further, the concept of a zero Lie product determined Banach algebra can be seen as the Lie version of the notion of a Banach algebra having property $\mathbb{B}$ (see~\cite{ABEV0}), which will also play an important role in this paper.
Another motivation for us for studying this concept is the similarity with the group-theoretic notion of  triviality of Bogomolov multiplier (see, e.g.,~\cite{M}), which made us particularly interested in considering it in the context of group algebras.

The paper is organized as follows. In Section~\ref{s0} we provide  motivating examples. Firstly, by applying a  result by Goldstein~\cite{G} we show that
$C^*$-algebras are
 zero Lie product determined Banach algebras. Secondly, we find a  Banach algebra, even a finite dimensional one, that is not
 zero Lie product determined.
 In Section~\ref{sec1} we prove that in the definition of a zero Lie product determined Banach algebra one can replace the role of $\mathbb{C}$ by any Banach space.
The bulk of the paper is Section~\ref{sec2} in which we
show  that the group algebra $L^1(G)$ of any amenable locally compact group $G$ is a zero Lie product determined Banach algebra.
We actually obtain this as a byproduct of the result concerning the condition
\begin{equation}\label{B1lz}
a,b\in A, \ ab=ba=0 \ \ \Longrightarrow \ \ \varphi(a,b)=0,
\end{equation}
where $A$ is an
amenable Banach algebras with property $\mathbb B$. We remark that~\eqref{B1lz} has also been already  studied in the literature,
but definitive results were  so far obtained only for finite dimensional algebras~\cite{ABEVmat, KLZ}.

\section{Examples}\label{s0}

The goal of this section is to provide examples indicating the nontriviality of the
concept of a  zero Lie product determined Banach algebra.

\begin{proposition} Every $C^*$-algebra is a zero Lie product determined Banach algebra.
\end{proposition}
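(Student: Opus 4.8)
The plan is to reduce the commuting-pair condition \eqref{B} to a two-sided zero-product condition to which a structure theorem of Goldstein for bounded bilinear forms on $C^*$-algebras applies. First I would record two consequences of \eqref{B}. Setting $b=a$ gives $\varphi(a,a)=0$ for every $a\in A$, and then expanding $\varphi(a+b,a+b)=0$ shows that $\varphi$ is alternating, i.e.\ $\varphi(a,b)=-\varphi(b,a)$ for all $a,b\in A$. Moreover, whenever $ab=ba=0$ one has $[a,b]=0$, so \eqref{B} yields $\varphi(a,b)=0$; thus $\varphi$ automatically satisfies the condition~\eqref{B1lz}.

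The next step is to invoke Goldstein's theorem~\cite{G}, which asserts that a bounded bilinear functional on a $C^*$-algebra that vanishes on every pair $(a,b)$ with $ab=ba=0$ must be representable as $\varphi(a,b)=\phi(ab)+\psi(ba)$ for suitable $\phi,\psi\in A^*$. This is the one genuinely nontrivial ingredient, and the step I expect to be the crux of the argument; the remaining manipulations are elementary bookkeeping built around it.

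It then remains to exploit the alternating property to collapse this representation onto the commutator. Comparing $\varphi(a,b)=\phi(ab)+\psi(ba)$ with $-\varphi(b,a)=-\phi(ba)-\psi(ab)$ yields $(\phi+\psi)(ab+ba)=0$ for all $a,b\in A$. Here I would observe that the elements $ab+ba$ span $A$ linearly: any positive $h$ equals $\tfrac12(kk+kk)$ with $k=h^{1/2}\in A$, and every element of $A$ is a linear combination of positive elements. Consequently $\phi+\psi=0$, so $\psi=-\phi$ and $\varphi(a,b)=\phi(ab)-\phi(ba)=\phi([a,b])$. Taking $\tau=\phi$ puts $\varphi$ into the required form~\eqref{Bl}, which completes the proof.
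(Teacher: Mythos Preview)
Your argument is correct and follows essentially the same route as the paper: reduce to Goldstein's structure theorem to obtain a representation $\varphi(a,b)=\tau_1(ab)+\tau_2(ba)$, and then exploit skew-symmetry to collapse it onto the commutator. Two small points of comparison: Goldstein's result in~\cite{G} is literally formulated for \emph{sesquilinear} forms that vanish on orthogonal self-adjoint pairs, so the paper first passes to $\psi(a,b)=\varphi(a,b^*)$ before invoking it (your bilinear version follows from this by exactly that substitution); and in the final step the paper simply averages $\varphi(a,b)=\tau_1(ab)+\tau_2(ba)$ with $\varphi(a,b)=-\varphi(b,a)=-\tau_1(ba)-\tau_2(ab)$ to read off $\varphi(a,b)=\tfrac{1}{2}(\tau_1-\tau_2)([a,b])$ directly, so your spanning argument for Jordan products, while correct, is not needed.
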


\begin{proof}
Let $A$ be a $C^*$-algebra, and let $\varphi\colon A\times A\to\mathbb{C}$ be a continuous bilinear functional satisfying~\eqref{B}.
Then the map $\psi\colon A\times A\to\mathbb{C}$ defined by $\psi(a,b)=\varphi(a,b^*)$ for all $a$, $b\in A$ is a continuous sesquilinear functional.
Further, if $a,b\in A$ are self-adjoint and $ab=0$, then $ba=0$, which in turn implies that $[a,b]=0$ and therefore $\psi(a,b)=\varphi(a,b)=0$.
This shows that $\psi$ is orthogonal in the sense of~\cite{G}
(see~\cite[Definition~1.1]{G}).
By~\cite[Theorem~1.10]{G}, $A$ is $\mathbb{C}$-stationary, which means (\cite[Definition~1.5]{G}) that there exist $\tau_1,\tau_2\in A^*$ such that
$\psi(a,b)=\tau_1(ab^*)+\tau_2(b^*a)$ for all $a,b\in A$. Consequently, we have
\begin{equation}\label{ee1}
\varphi(a,b)=\tau_1(ab)+\tau_2(ba) \quad (a,b\in A).
\end{equation}
On the other hand, if $a\in A$, then $[a,a]=0$ and therefore $\varphi(a,a)=0$. Hence $\varphi$ is skew-symmetric and taking into account~\eqref{ee1} we get
\begin{equation}\label{ee2}
\varphi(a,b)=-\varphi(b,a)=-\tau_1(ba)-\tau_2(ab) \quad (a,b\in A).
\end{equation}
Adding~\eqref{ee1} and~\eqref{ee2}, we obtain
\[
2\varphi(a,b)=\tau_1([a,b])-\tau_2([a,b]) \quad (a,b\in A),
\]
which shows that $\varphi$ is of the form~\eqref{Bl}, where $\tau\in A^*$ is defined by $\tau=\tfrac{1}{2}(\tau_1-\tau_2)$.
\end{proof}

We will now give an example of  a finite dimensional Banach algebra  that is not  zero Lie product determined.
This is of interest also from a purely algebraic viewpoint. Namely, so far only examples of infinite dimensional  algebras that are not
zero Lie product determined were found~\cite{BGS} (since bilinear functionals are automatically continuous  in finite dimension, in this framework
there is no difference between ``zero Lie product determined Banach algebra'' and ``zero Lie product  determined  algebra'').
The algebra from the next proposition can be thought of as the  Grassmann algebra with four generators to which we add another relation.

\begin{proposition}
The 10-dimensional Banach algebra
\[
A= \mathbb{C}\big\langle x_1,x_2,x_3,x_4\,|\, x_1 x_2 = x_3 x_4, x_i^2=0, x_i x_j=-x_j x_i,\, i,j=1,2,3,4\big\rangle
\]
is not  zero Lie product determined.
\end{proposition}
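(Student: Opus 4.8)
The plan is to exhibit a single bilinear functional $\varphi$ (automatically continuous, as $\dim A<\infty$) that satisfies~\eqref{B} but is not of the form~\eqref{Bl}. First I would record the structure of $A$. Writing $V=\operatorname{span}\{x_1,x_2,x_3,x_4\}$, the algebra is graded, $A=A_0\oplus A_1\oplus A_2$ with $A_0=\mathbb{C}1$, $A_1=V$, and $A_2$ spanned by the products $x_ix_j$ $(i<j)$. A short computation with the relations shows that every product of three generators vanishes (e.g.\ $x_1x_2x_3=x_3x_4x_3=0$), so $A_k=0$ for $k\ge 3$; combined with the stated value $\dim A=10$ this forces $\dim A_2=5$, the relation $x_1x_2=x_3x_4=:z$ accounting for the single identification among the six products $x_ix_j$. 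Since $A_0$ is central and $A_1A_2=A_2A_1=A_2A_2=0$, for arbitrary $a=a_0+a_1+a_2$ and $b=b_0+b_1+b_2$ one gets the basic identity $[a,b]=[a_1,b_1]$, so all Lie-theoretic information is carried by the degree-one parts.

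The crux is to describe which pairs commute. Because the product on $V$ is alternating, the multiplication $V\times V\to A_2$ factors through a surjection $\Lambda^2 V\to A_2$ whose kernel is one-dimensional, hence equal to $\mathbb{C}\omega$ with $\omega=x_1\wedge x_2-x_3\wedge x_4$. Under this identification $a_1b_1$ is the class of $a_1\wedge b_1$, so $[a_1,b_1]$ is the class of $2\,a_1\wedge b_1$, and thus $a_1,b_1$ commute precisely when $a_1\wedge b_1\in\mathbb{C}\omega$. Here is the only genuinely nontrivial step, and the point where the added relation does its work: $\omega$ is \emph{not} decomposable, since $\omega\wedge\omega=-2\,x_1\wedge x_2\wedge x_3\wedge x_4\ne 0$. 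As $a_1\wedge b_1$ is always decomposable, the membership $a_1\wedge b_1\in\mathbb{C}\omega$ forces $a_1\wedge b_1=0$, i.e.\ $a_1$ and $b_1$ are linearly dependent. Together with $[a,b]=[a_1,b_1]$ this yields the clean characterization: $a$ and $b$ commute if and only if their degree-one components are linearly dependent.

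Finally I would produce the counterexample. Writing $a_1=\sum_i\alpha_i x_i$ and $b_1=\sum_i\beta_i x_i$ for the degree-one parts, set $\varphi(a,b)=\alpha_1\beta_2-\alpha_2\beta_1$. This is bilinear, being the alternating form on $V$ that reads off the $x_1\wedge x_2$ coordinate, pulled back along the projection onto $A_1$; since every alternating form vanishes on linearly dependent pairs, the characterization above shows that $\varphi$ satisfies~\eqref{B}. It remains to see that $\varphi$ is not of the form~\eqref{Bl}. If $\varphi(a,b)=\tau([a,b])$ for some $\tau\in A^*$, then evaluating at $(x_1,x_2)$ and at $(x_3,x_4)$ gives $\varphi(x_1,x_2)=2\tau(z)=\varphi(x_3,x_4)$, because $[x_1,x_2]=[x_3,x_4]=2z$; but by construction $\varphi(x_1,x_2)=1$ and $\varphi(x_3,x_4)=0$, a contradiction. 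Hence $A$ is not zero Lie product determined. The relation $x_1x_2=x_3x_4$ is precisely what merges these two commutators into the single direction $z$, letting an admissible $\varphi$ separate them while no functional of the form~\eqref{Bl} can.
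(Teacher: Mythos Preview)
Your proof is correct and uses the same counterexample $\varphi$ as the paper; the overall strategy---define $\varphi(a,b)=\alpha_1\beta_2-\alpha_2\beta_1$, check it vanishes on commuting pairs, and observe that $[x_1,x_2]=[x_3,x_4]$ while $\varphi$ separates them---is identical.

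The one genuine difference is in how you verify condition~\eqref{B}. The paper writes out the commutator in coordinates, obtains the system $\lambda_1\mu_3=\lambda_3\mu_1$, $\lambda_2\mu_3=\lambda_3\mu_2$, $\lambda_1\mu_4=\lambda_4\mu_1$, $\lambda_2\mu_4=\lambda_4\mu_2$, and $(\lambda_1\mu_2-\lambda_2\mu_1)+(\lambda_3\mu_4-\lambda_4\mu_3)=0$, and then manipulates these directly to force $\lambda_1\mu_2-\lambda_2\mu_1=0$. You instead identify $A_2$ with $\Lambda^2V/\mathbb{C}\omega$, note that $[a_1,b_1]=0$ means $a_1\wedge b_1\in\mathbb{C}\omega$, and invoke the non-decomposability of $\omega$ (via $\omega\wedge\omega\ne 0$) to conclude that $a_1\wedge b_1=0$, i.e.\ the degree-one parts are linearly dependent. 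Your argument thus proves the stronger statement that \emph{all} $2\times 2$ minors $\alpha_i\beta_j-\alpha_j\beta_i$ vanish, not just the $(1,2)$ one, and it makes transparent why the extra relation $x_1x_2=x_3x_4$ is exactly what is needed: it is the unique (up to scalar) relation whose kernel vector in $\Lambda^2V$ is indecomposable. The paper's computation is shorter to write down but hides this structure; your exterior-algebra route is more conceptual and would generalize more readily.
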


\begin{proof}
It is easy to check that the elements
\[
1,\, x_1, \, x_2,\, x_3,\, x_4,\, x_1x_2, \, x_1x_3, \, x_1x_4,\, x_2x_3,\,x_2x_4
\]
form a basis of $A$ (so that $\dim_\mathbb{C} A = 10$). Note that $1$ and all $x_i x_j$ lie in $Z$, the center  of $A$.

Define a bilinear functional $\varphi \colon A\times A\to\mathbb{C}$ by
\[
\varphi(x_1,x_2) = -\varphi(x_2,x_1) = 1
\]
and
\[
\varphi(u,v) = 0
\]
for all other pairs of elements from our basis. Take a pair of commuting elements $a$, $b\in A$. We can write
\[
a = \sum_{i=1}^4 \lambda_i x_i  + z\quad\mbox{and}\quad b= \sum_{j=1}^4 \mu_j x_j +  w,
\]
where $\lambda_i,\mu_j\in \mathbb{C}$ and $z,w\in Z$. Our goal is to show that $\varphi(a,b)=\lambda_1\mu_2-\lambda_2\mu_1$ is $0$.
From $[a,b]=0$ we  obtain
\[
\Big[\sum_{i=1}^4 \lambda_i x_i, \sum_{j=1}^4 \mu_j x_j\Big] =0,
\]
which yields
\begin{align*}
&\big((\lambda_1\mu_2 - \lambda_2\mu_1)+ (\lambda_3\mu_4 - \lambda_4\mu_3)\big)x_1x_2\\
+& (\lambda_1\mu_3 - \lambda_3\mu_1)x_1x_3 + (\lambda_2\mu_3 - \lambda_3\mu_2)x_2x_3 \\
+&  (\lambda_1\mu_4 - \lambda_4\mu_1)x_1x_4+ (\lambda_2\mu_4 - \lambda_4\mu_2)x_2x_4 =0.
\end{align*}
Consequently,
\begin{equation} \label{a}
(\lambda_1\mu_2 - \lambda_2\mu_1)+ (\lambda_3\mu_4 - \lambda_4\mu_3) =0,
\end{equation}
\begin{equation} \label{bb}
\lambda_1\mu_3 = \lambda_3\mu_1,\,\,\,\lambda_2\mu_3 = \lambda_3\mu_2,
\end{equation}
\begin{equation} \label{c}
\lambda_1\mu_4 = \lambda_4\mu_1,\,\,\,\lambda_2\mu_4 = \lambda_4\mu_2.
\end{equation}
Note that~\eqref{bb} yields
\[
(\lambda_1\mu_2 - \lambda_2\mu_1)\mu_3=0.
\]
and, similarly,~\eqref{c} yields
\[
(\lambda_1\mu_2 - \lambda_2\mu_1)\mu_4=0.
\]
But then we infer from~\eqref{a} that $\lambda_1\mu_2 - \lambda_2\mu_1=0$, as desired. We have thereby proved that $\varphi$ satisfies~\eqref{B}.
However, since
$\varphi(x_1,x_2)\ne \varphi(x_3,x_4)$, we see from $[x_1,x_2] = [x_3,x_4]$ that $\varphi$ does not satisfy~\eqref{Bl}.
\end{proof}

\section{An alternative definition}\label{sec1}

From now on, we write $[A,A]$ for the linear span of all commutators of the Banach algebra $A$.

\begin{proposition}
Let $A$ be a Banach algebra. Then the following properties are equivalent:
\begin{enumerate}
\item
the algebra $A$ is a zero Lie product determined Banach algebra,
\item
for each Banach space $X$, every continuous bilinear map $\varphi\colon A\times A\to X$ with the property
that $\varphi(a,b)=0$ whenever $a,b\in A$ are such that $[a,b]=0$
is of the form $\varphi(a,b)=T([a,b])$ $(a,b\in A)$ for
a unique continuous linear map $T\colon [A,A]\to X$.
\end{enumerate}
\end{proposition}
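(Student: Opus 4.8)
The implication (2)$\Rightarrow$(1) is the easy direction, and I would simply specialise to $X=\mathbb{C}$. Given a continuous bilinear functional $\varphi\colon A\times A\to\mathbb{C}$ satisfying~\eqref{B}, condition (2) produces a continuous linear functional $T\colon[A,A]\to\mathbb{C}$ with $\varphi(a,b)=T([a,b])$; extending $T$ to an element $\tau\in A^*$ by the Hahn--Banach theorem then yields the form~\eqref{Bl}.

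The substance lies in (1)$\Rightarrow$(2). I would fix a Banach space $X$ and a continuous bilinear map $\varphi\colon A\times A\to X$ vanishing on commuting pairs, and take as candidate the map $T\big(\sum_i[a_i,b_i]\big)=\sum_i\varphi(a_i,b_i)$; the whole difficulty is to show that this is well defined and continuous for the norm that $[A,A]$ inherits from $A$. The first move is to reduce to the scalar case: for each $\xi\in X^*$ the functional $\xi\circ\varphi$ is continuous, bilinear and satisfies~\eqref{B}, so by (1) there is $\tau_\xi\in A^*$ with $\xi(\varphi(a,b))=\tau_\xi([a,b])$. Consequently, for every finite family $(a_i,b_i)$ and every $\xi$ with $\|\xi\|\le1$ one has $\xi\big(\sum_i\varphi(a_i,b_i)\big)=\tau_\xi\big(\sum_i[a_i,b_i]\big)$, and taking the supremum over such $\xi$ gives the single inequality
\[
\Big\|\sum_i\varphi(a_i,b_i)\Big\|\le \Big(\sup_{\|\xi\|\le1}\|\tau_\xi\|\Big)\,\Big\|\sum_i[a_i,b_i]\Big\|,
\]
which simultaneously yields the well-definedness of $T$ (apply it when $\sum_i[a_i,b_i]=0$) and its continuity, \emph{provided} the constant $\sup_{\|\xi\|\le1}\|\tau_\xi\|$ is finite.

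Securing that uniform bound is the main obstacle: a bare pointwise application of (1) chooses each $\tau_\xi$ independently, and there is no reason for the norms $\|\tau_\xi\|$ to stay bounded (indeed, controlling them pointwise is equivalent to already knowing $T$ is continuous). The plan is to extract the uniformity from (1) by an open mapping argument. Let $V$ denote the set of all continuous bilinear functionals $A\times A\to\mathbb{C}$ satisfying~\eqref{B}; since~\eqref{B} is a closed condition, $V$ is a closed subspace of the Banach space of all continuous bilinear functionals on $A\times A$, hence a Banach space. The map $\rho\colon A^*\to V$ given by $\rho(\tau)(a,b)=\tau([a,b])$ is bounded and linear, and property (1) says exactly that $\rho$ is surjective. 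By the open mapping theorem $\rho$ is open, so there is a constant $C$ such that every $\psi\in V$ admits a preimage $\tau$ with $\|\tau\|\le C\|\psi\|$.

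Applying this to $\psi=\xi\circ\varphi$, whose norm is at most $\|\varphi\|\,\|\xi\|$, lets me select $\tau_\xi$ with $\tau_\xi([a,b])=\xi(\varphi(a,b))$ and $\|\tau_\xi\|\le C\|\varphi\|\,\|\xi\|$, so that $\sup_{\|\xi\|\le1}\|\tau_\xi\|\le C\|\varphi\|<\infty$. Feeding this into the displayed inequality completes the construction: $T$ is a well-defined linear map with $\|T\|\le C\|\varphi\|$ and $T([a,b])=\varphi(a,b)$ by definition. Finally, uniqueness is automatic, since $[A,A]$ is the linear span of the commutators and $T$ is prescribed on each of them.
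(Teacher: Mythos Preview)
Your proof is correct and follows the same overall architecture as the paper's: reduce to the scalar case by composing with $\xi\in X^*$, and then extract a uniform bound on the resulting functionals via a Banach--space automatic continuity principle. The difference lies in how that bound is obtained. The paper makes $\xi\mapsto\tau(\xi)$ into a genuine linear map $X^*\to[A,A]^*$ (using that the restriction of $\tau_\xi$ to $[A,A]$ is unique), and then applies the closed graph theorem to this map; the bound is $\|\tau\|$, which depends on $\varphi$ and $X$. You instead apply the open mapping theorem to the surjection $\rho\colon A^*\to V$, which gives a universal constant $C$ depending only on $A$, and then simply \emph{choose} each $\tau_\xi$ with $\|\tau_\xi\|\le C\|\varphi\|\,\|\xi\|$ without ever needing $\xi\mapsto\tau_\xi$ to be linear. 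Your route is marginally more economical (no linearity check, no closed-graph verification) and yields the mild bonus that the constant in $\|T\|\le C\|\varphi\|$ is independent of $X$; the paper's route has the conceptual advantage of producing an honest bounded operator $X^*\to[A,A]^*$, which is a natural object in its own right. Either way, the substance is the same open-mapping/closed-graph idea.
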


\begin{proof}
Suppose that (1) holds.
Let $X$ be a Banach space and let $\varphi\colon A\times A\to X$ be a continuous bilinear map
with the property that $\varphi(a,b)=0$ whenever $a,b\in A$ are such that $[a,b]=0$.
For each $\xi\in X^*$, the continuous bilinear functional $\xi\circ\varphi\colon A\times A\to\mathbb{C}$
satisfies~\eqref{B}. Therefore there exists a unique $\tau(\xi)\in [A,A]^{*}$ such that
$\xi(\varphi(a,b))=\tau(\xi)([a,b])$ for all $a,b\in A$. It is clear that the map $\tau\colon X^*\to [A,A]^*$
is linear. We next show that $\tau$ is continuous. Let $(\xi_n)$ be a sequence in $X^*$ with $\lim\xi_n=0$
and $\lim\tau(\xi_n)=\xi$ for some $\xi\in [A,A]^*$. For each $a,b\in A$, we have
\[
0=\lim\xi_n(\varphi(a,b))=\lim\tau(\xi_n)([a,b])=\xi([a,b]).
\]
We thus have $\xi=0$, and the closed graph theorem yields the continuity of $\tau$.

For all $a_1,\ldots,a_n,b_1,\ldots,b_n\in A$ and $\xi\in X^*$ we have
\begin{equation}\label{11667}
\begin{aligned}
\xi\Bigl(\sum_{k=1}^n\varphi(a_k,b_k)\Bigr) & =
\sum_{k=1}^n\xi\bigl(\varphi(a_k,b_k)\bigr) =
\sum_{k=1}^n\tau(\xi)([a_k,b_k]) \\
& = \tau(\xi)\Bigl(\sum_{k=1}^n[a_k,b_k]\Bigr).
\end{aligned}
\end{equation}
Consequently, if $a_1,\ldots,a_n,b_1,\ldots,b_n\in A$ are such that $\sum_{k=1}^n[a_k,b_k]=0$,
then $\xi\bigl(\sum_{k=1}^n\varphi(a_k,b_k)\bigr)=0$ for each $\xi\in X^*$, and hence
$\sum_{k=1}^n\varphi(a_k,b_k)=0$. We thus can define a linear map $T\colon [A,A]\to X$ by
\[
T\Bigl(\sum_{k=1}^n[a_k,b_k]\Bigr)=\sum_{k=1}^n\varphi(a_k,b_k)
\]
for all $a_1,\ldots,a_n,b_1,\ldots,b_n\in A$. Of  course, $\varphi(a,b)=T([a,b])$ for all $a,b\in A$.
Our next concern is the continuity of $T$.
Let  $a_1,\ldots,a_n,b_1,\ldots,b_n\in A$. Then there exists $\xi\in X^*$ such that
\[
\xi\Bigl(\sum_{k=1}^n\varphi(a_k,b_k)\Bigr)=\Bigl\Vert\sum_{k=1}^n\varphi(a_k,b_k)\Bigr\Vert.
\]
On account of~\eqref{11667}, we have
\begin{align*}
\Bigl\Vert T\Bigl(\sum_{k=1}^n[a_k,b_k]\Bigr)\Bigr\Vert & =
\Bigl\Vert\sum_{k=1}^n\varphi(a_k,b_k)\Bigr\Vert  = \xi\Bigl(\sum_{k=1}^n\varphi(a_k,b_k)\Bigr) \\
& = \Bigl\vert\tau(\xi)\Bigl(\sum_{k=1}^n[a_k,b_k]\Bigr)\Bigr\vert
 \le \Vert\tau(\xi)\Vert\Bigl\Vert\sum_{k=1}^n[a_k,b_k]\Bigr\Vert \\
& \le \Vert\tau\Vert\Bigl\Vert\sum_{k=1}^n[a_k,b_k]\Bigr\Vert,
\end{align*}
which shows the continuity of $T$, and hence that property (2) holds.

We now assume that (2) holds. Let $\varphi\colon A\times A\to\mathbb{C}$ be a continuous bilinear functional  satisfying~\eqref{B}.
By applying property (2) with $X=\mathbb{C}$, we get  $\tau\in [A,A]^*$ such that $\varphi(a,b)=\tau([a,b])$ $(a,b\in A)$. The functional
$\tau$ can be extended to a continuous linear functional on $A$ so that (1) is obtained.
\end{proof}

\section{Amenable Banach algebras with property $\mathbb{B}$}\label{sec2}

We say that a Banach algebra $A$ has \emph{property $\mathbb{B}$} if for every continuous bilinear functional
$\varphi \colon A\times A\to \mathbb{C}$, the condition
\begin{equation}\label{B1}
a,b\in A, \ ab=0 \ \ \Rightarrow \ \ \varphi(a,b)=0
\end{equation}
implies the condition
\begin{equation}\label{B11}
\varphi(ab,c)=\varphi(a,bc)  \quad (a,b,c\in A).
\end{equation}
According to~\cite[Remark 2.1]{AFA}, this definition agrees with the one given in the seminal paper~\cite{ABEV0},
i.e., the Banach algebra $A$ has property $\mathbb{B}$ if and only if for each Banach space $X$ and for each
continuous bilinear map $\varphi\colon A\times A\to X$ the condition~\eqref{B1} implies the condition~\eqref{B11}.
We remark that if $A$ has a bounded approximate identity,~\eqref{B11} is equivalent to the condition that
$\varphi(a,b)= \tau(ab)$ for some  $\tau\in A^*$ (see~\cite[Lemma~2.3]{ABEV0}).
In~\cite{ABEV0} it was shown that many important examples of Banach algebras,
including  $C^*$-algebras,  group algebras on arbitrary locally compact groups,
and the algebra $\mathcal{A}(X)$ of all approximable operators on any
Banach space $X$, have property $\mathbb{B}$,
and that this property can be applied to a variety of problems. Since then, a number of papers treating
property $\mathbb{B}$ have been published; see the last paper in the series~\cite{ABESV2} and references therein.

The class of amenable Banach algebras is of great significance.
We refer the reader to~\cite{R} for the necessary background on amenability.
There are different characterizations of amenable Banach algebras. The seminal one comes from B. E. Johnson: vanishing of a
certain cohomology group.
For our purposes here, the best way to introduce the amenability is the following.
Let $A$ be a Banach algebra.
The projective tensor product $A\widehat{\otimes}A$ becomes a Banach
$A$-bimodule for the products defined by
\[
a\cdot(b\otimes c)=(ab)\otimes c
\]
and
\[
(b\otimes c)\cdot a=b\otimes (ca)
\]
for all $a,b,c\in A$.
There is a unique continuous linear map $\pi\colon A\widehat{\otimes}A\to A$ such
that
\[
\pi(a\otimes b)=ab
\]
for all $a,b\in A$.
The map $\pi$ is the projective induced product map, and it is an $A$-bimodule homomorphism.
An \emph{approximate diagonal} for $A$ is a bounded net $(u_{\lambda})_{\lambda\in\Lambda}$
in $A\widehat{\otimes}A$ such that, for each $a\in A$, we have
\begin{equation}\label{ad1}
\lim_{\lambda\in\Lambda}(a\cdot u_ \lambda-u_\lambda\cdot a)=0
\end{equation}
and
\begin{equation}\label{ad2}
\lim_{\lambda\in\Lambda}\pi(u_\lambda)a=a.
\end{equation}
We point out that~\eqref{ad1} together with~\eqref{ad2} implies that also $\lim a\pi(u_\lambda)=a$ for each $a\in A$.
Consequently, the net $(\pi(u_\lambda))_{\lambda\in\Lambda}$ is a bounded approximate identity for $A$.
The Banach algebra $A$ is \emph{amenable} if and only if $A$ has an approximate diagonal.

Throughout this section we are notably interested in amenable Banach algebras having property $\mathbb{B}$.
According to~\cite{R}, the following are examples of amenable Banach algebras (which we already know to have property $\mathbb{B}$):
nuclear $C^*$-algebras, the group algebra $L^1(G)$ for each amenable locally compact group $G$,
and the algebra $\mathcal{A}(X)$ for Banach spaces with certain approximation properties
(this includes the Banach space $C_0(\Omega)$ for each locally compact Hausdorff space $\Omega$ and
the Banach space $L^p(\mu)$ for each measure space $(\Omega,\Sigma,\mu)$ and each $p\in[1,\infty]$).

We begin with a lemma whose version  appears also in~\cite{ABEV}.

\begin{lemma}\label{l1519}
Let $A$ be a Banach algebra with property $\mathbb{B}$ and having a bounded approximate identity,
let $X$ be a Banach space,  and
let $\varphi\colon A\times A\to X$ be a continuous bilinear map satisfying the condition:
\begin{equation*}
a,b\in A, \ ab=ba=0 \ \Rightarrow \ \varphi(a,b)=0.
\end{equation*}
Then
\begin{equation}\label{pat3}
\varphi(ab,cd)-\varphi(a,bcd)+\varphi(da,bc)-\varphi(dab,c)=0 \quad (a,b,c,d\in A)
\end{equation}
and there exists a continuous linear operator $S\colon A\to X$
such that
\begin{equation}\label{eqfifi}
\varphi(ab,c)-\varphi(b,ca)+\varphi(bc,a)=S(abc) \quad (a,b,c\in A).
\end{equation}
\end{lemma}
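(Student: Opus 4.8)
The plan is to establish the functional identity~\eqref{pat3} first and then to deduce both the existence of $S$ and~\eqref{eqfifi} from it. It is convenient to write $D(x,y,z)=\varphi(xy,z)-\varphi(x,yz)$ for the defect measuring the failure of property~$\mathbb{B}$ for $\varphi$ itself; a direct rearrangement shows that~\eqref{pat3} is nothing but the cyclic-type symmetry $D(a,b,cd)=D(da,b,c)$ of this defect. Since $\varphi$ satisfies only the \emph{two-sided} orthogonality condition, whereas property~$\mathbb{B}$ is designed for the one-sided one, essentially all the difficulty is concentrated in~\eqref{pat3}; once it is available, the passage to~\eqref{eqfifi} is purely formal.

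For that passage, assume~\eqref{pat3} and introduce the continuous trilinear map
\[
\Gamma(a,b,c)=\varphi(ab,c)-\varphi(b,ca)+\varphi(bc,a)\qquad(a,b,c\in A).
\]
The point is that $\Gamma$ is $A$-balanced: the identity $\Gamma(ax,b,c)=\Gamma(a,xb,c)$ is exactly~\eqref{pat3} evaluated at $(a,b,c,d)=(b,c,a,x)$, and the identity $\Gamma(a,bx,c)=\Gamma(a,b,xc)$ is $-1$ times~\eqref{pat3} evaluated at $(a,b,c,d)=(b,x,c,a)$. Consequently $\Gamma$ annihilates the kernel of the canonical map $A\widehat{\otimes}A\widehat{\otimes}A\to A\widehat{\otimes}_A A\widehat{\otimes}_A A$ and hence factors through the latter. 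Because $A$ has a bounded approximate identity, multiplication induces a topological isomorphism $A\widehat{\otimes}_A A\widehat{\otimes}_A A\cong A$ (iterating the standard isomorphism $A\widehat{\otimes}_A A\cong A$), and Cohen's factorization theorem gives $A\cdot A\cdot A=A$, so that every element is a triple product. Composing, we obtain a continuous linear map $S\colon A\to X$ with $S(abc)=\Gamma(a,b,c)$ for all $a,b,c\in A$, which is precisely~\eqref{eqfifi}.

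It remains to prove~\eqref{pat3}, and this is where I expect the real work to lie. The strategy I would follow is the one underlying~\cite{ABEV}: manufacture from $\varphi$ a continuous bilinear map $\Psi$ that provably satisfies the one-sided condition $uv=0\Rightarrow\Psi(u,v)=0$, apply property~$\mathbb{B}$ in its Banach-space valued form to get $\Psi(uv,w)=\Psi(u,vw)$, and unwind this relation into~\eqref{pat3}. The obstacle is the construction of $\Psi$: the hypothesis only yields $\varphi(a,b)=0$ when $ab=ba=0$, and no naive substitution of words in $u,v$ into the two slots of $\varphi$ can force \emph{both} products of the resulting arguments to vanish from $uv=0$ alone, so the ``reverse'' products have to be neutralized. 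This is exactly the point at which the bounded approximate identity must be used in tandem with property~$\mathbb{B}$: one factors the relevant elements and passes to the limit along the approximate identity so as to convert the available two-sided vanishing into the one-sided vanishing that property~$\mathbb{B}$ can consume. Verifying that the map so constructed is genuinely one-sided orthogonal---equivalently, controlling these reverse products---will be the crux; the subsequent bookkeeping that turns $\Psi(uv,w)=\Psi(u,vw)$ into the four-term identity~\eqref{pat3} should be routine.
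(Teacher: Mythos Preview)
Your derivation of~\eqref{eqfifi} from~\eqref{pat3} is correct and in fact cleaner than the paper's: you observe that $\Gamma$ is $A$-balanced in both slots (and your two substitutions check out), then invoke $A\widehat{\otimes}_A A\widehat{\otimes}_A A\cong A$. The paper instead defines $S$ concretely as the limit $S(a)=\lim_\lambda\varphi(a,\rho_\lambda)$ obtained by letting $c$ run along the approximate identity in~\eqref{pat3}, using Cohen's factorization to see that the limit exists for every $a$. Both routes are fine; yours packages the same content more abstractly.

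The genuine gap is in~\eqref{pat3}. You have correctly diagnosed the difficulty, but you have not actually produced the map $\Psi$, and your guess about how the approximate identity enters is off. The paper's trick is to take values in a quotient of a space of bilinear maps: define $\psi\colon A\times A\to\mathcal{B}^2(A;X)$ by $\psi(a,b)(s,t)=\varphi(bs,ta)$ and pass to $\mathcal{B}^2(A;X)/\mathcal{B}_0^2(A;X)$. The point is that if $ab=0$ and $st=0$, then $(bs)(ta)=b(st)a=0$ \emph{and} $(ta)(bs)=t(ab)s=0$, so $\varphi(bs,ta)=0$; thus $ab=0$ alone forces $\psi(a,b)\in\mathcal{B}_0^2(A;X)$. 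In other words, the ``reverse product'' is neutralized not by the approximate identity but by the one-sided condition $st=0$ built into the target quotient. One then applies property~$\mathbb{B}$ to the quotient map to get $\psi(ab,c)-\psi(a,bc)\in\mathcal{B}_0^2(A;X)$, and applies property~$\mathbb{B}$ a second time to this element of $\mathcal{B}_0^2(A;X)$, obtaining a six-variable identity. Only at this stage does the bounded approximate identity appear, and its role is merely to collapse two of the six variables and recover the four-variable identity~\eqref{pat3}. Your proposal anticipates none of this two-step use of property~$\mathbb{B}$ with bilinear-map-valued target, and without it the argument does not go through.
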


\begin{proof}
Let  $\mathcal{B}^2(A;X)$ denote the Banach space of all continuous bilinear maps from
$A\times A$ to $X$, and let $\mathcal{B}_0^2(A;X)$ denote the closed subspace of
$\mathcal{B}^2(A;X)$ consisting of those bilinear maps $\varphi$ which
satisfy~\eqref{B1}. We define
\[
\psi\colon A\times A\to\mathcal{B}^2(A;X)
\]
by
\[
\psi(a,b)(s,t)=\varphi(bs,ta) \quad (a,b,s,t\in A).
\]
It is immediate to check that $\psi(a,b)\in\mathcal{B}_0^2(A;X)$
whenever $a,b\in A$ are such that $ab=0$. Consequently, the
continuous bilinear map
\[
\widetilde{\psi}\colon A\times A\to \mathcal{B}^2(A;X)/\mathcal{B}_0^2(A;X)
\]
defined by
\[
\widetilde{\psi}(a,b)=\psi(a,b)+\mathcal{B}_0^2(A;X) \quad (a,b\in A)
\]
satisfies~\eqref{B1}.
Property $\mathbb{B}$ then gives
\begin{equation*}
\psi(ab,c)-\psi(a,bc)\in\mathcal{B}_0^2(A;X) \quad (a,b,c\in A).
\end{equation*}
For each $a,b,c\in A$, property $\mathbb{B}$ now yields
\[
\bigl(\psi(ab,c)-\psi(a,bc)\bigr)(rs,t)=
\bigl(\psi(ab,c)-\psi(a,bc)\bigr)(r,st)
\]
for all $r$, $s$, $t\in A$.
Hence
\begin{equation}\label{pat2}
\varphi(crs,tab)-\varphi(bcrs,ta)-\varphi(cr,stab)+\varphi(bcr,sta)=0
\end{equation}
for all $a$, $b$, $c$, $r$, $s$, $t\in A$.

Let $(\rho_\lambda)_{\lambda\in\Lambda}$ be an approximate identity of
$A$ of bound $C$. For each $a$, $b$, $c$, $r$, $s\in A$,
we apply~\eqref{pat2} with the element $t$ replaced by $\rho_\lambda$ ($\lambda\in\Lambda$)
and then we take the limit to arrive at
\begin{equation}\label{pat2b}
\varphi(crs,ab)-\varphi(bcrs,a)-\varphi(cr,sab)+\varphi(bcr,sa)=0.
\end{equation}
We now replace $r$ by $\rho_\lambda$ ($\lambda\in\Lambda$) in~\eqref{pat2b}
and take the limit to get
\begin{equation*}
\varphi(cs,ab)-\varphi(bcs,a)-\varphi(c,sab)+\varphi(bc,sa)=0,
\end{equation*}
which gives~\eqref{pat3}.

By applying~\eqref{pat3} with the element $c$ replaced by $\rho_\lambda$ ($\lambda\in\Lambda$)
we see that the net $(dab, \varphi(\rho_\lambda))_{\lambda\in\Lambda}$ is convergent and by taking the limit in~\eqref{pat3} we arrive at
\begin{equation}\label{e2148}
\begin{aligned}
&\varphi(ab,d)-\varphi(a,bd)+\varphi(da,b)-
\lim_{\lambda\in\Lambda}\varphi(dab,\rho_\lambda)
\\
=& \lim_{\lambda\in\Lambda}
\bigl(\varphi(ab,\rho_\lambda d)-\varphi(a,b\rho_\lambda d)+ \varphi(da,b\rho_\lambda)-\varphi(dab,\rho_\lambda)\bigr)=0
\end{aligned}
\end{equation}
for all $a$, $b$, $d\in A$.
By Cohen's factorization theorem (see~\cite[Corollary~11 in \S 11]{bd}),
each $c\in A$ can be written in the form $c=dab$ with
$a$, $b$, $d\in A$, and
hence the net $(\varphi(c,\rho_\lambda))_{\lambda\in\Lambda}$ is convergent.
We can thus  define a linear operator
$S\colon A\to X$
by
\[
S(a)=\lim_{\lambda\in\Lambda}\varphi(a,\rho_\lambda)
\]
for each $a\in A$.
Since
$\Vert\varphi(a,\rho_\lambda)\Vert\le C \Vert\varphi\Vert \Vert a\Vert$
for all $a\in A$ and $\lambda\in\Lambda$, it follows that
$\Vert S(a)\Vert\le C \Vert\varphi\Vert \Vert a\Vert$ for each $a\in A$,
which implies that $S$ is continuous. Further,~\eqref{e2148}
gives~\eqref{eqfifi}.
\end{proof}

\begin{lemma}\label{l1520}
Let $A$ be an amenable Banach algebra,
let $X$ be a Banach space, and
let $\varphi\colon A\times A\to X$ be a continuous bilinear map.
Suppose that there exists a continuous linear operator $S\colon A\to X$
such that
\begin{equation}\label{b}
\varphi(ab,c)-\varphi(b,ca)+\varphi(bc,a)=S(abc) \quad (a,b,c\in A).
\end{equation}
Then there exist continuous linear operators
$\Phi\colon [A,A]\to X$ and $\Psi\colon A\to X$ such that
\begin{equation*}
\varphi(a,b)=\Phi([a,b])+\Psi(a\circ b) \quad (a,b\in A).
\end{equation*}
Here and subsequently, $a\circ b$ stands for $ab + ba$.
\end{lemma}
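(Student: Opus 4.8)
The plan is to peel off the symmetric part first. If the asserted decomposition holds then, substituting $\Phi([\cdot,\cdot])$ and $\Psi(\cdot\circ\cdot)$ into the left-hand side of~\eqref{b} and using the elementary identities $(ab)\circ c-b\circ(ca)+(bc)\circ a=2abc$ and $[ab,c]-[b,ca]+[bc,a]=0$, one sees that the commutator part contributes nothing and that $S=2\Psi$ is forced. I would therefore \emph{define} $\Psi:=\tfrac12 S$ and set $\varphi_0(a,b):=\varphi(a,b)-\tfrac12 S(a\circ b)$. A direct check (the two identities above again) shows that $\varphi_0$ satisfies the cyclic relation $\varphi_0(ab,c)-\varphi_0(b,ca)+\varphi_0(bc,a)=0$. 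The whole problem then reduces to producing a continuous linear map $\Phi\colon[A,A]\to X$ with $\varphi_0(a,b)=\Phi([a,b])$. Note that this single requirement already forces $\varphi_0$ to be skew-symmetric (apply the well-definedness of $\Phi$ to $[a,b]+[b,a]=0$), so skew-symmetry need not be established beforehand.

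For the construction I would exploit amenability through an approximate diagonal $(u_\lambda)$, writing $u_\lambda=\sum_i x_i\otimes y_i$, and keep two tools in hand. First, since $\pi(u_\lambda)$ is a bounded approximate identity, replacing $c$ by $\pi(u_\lambda)$ in the cyclic relation and invoking Cohen factorization yields $\lim_\lambda\varphi_0(c,\pi(u_\lambda))=0$ for every $c\in A$. Second, applying the induced continuous linear map of any continuous bilinear $G$ to $a\cdot u_\lambda-u_\lambda\cdot a\to 0$ gives the shift identity $\sum_i G(ax_i,y_i)-\sum_i G(x_i,y_ia)\to 0$. Writing $b=\lim_\lambda\pi(u_\lambda)b$ and applying the cyclic relation once to each term $\varphi_0(a,x_iy_ib)$, I obtain the representation $\varphi_0(a,b)=\lim_\lambda\bigl(\sum_i\varphi_0(y_iba,x_i)+\sum_i\varphi_0(ax_i,y_ib)\bigr)$.

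The heart of the argument is to collapse this. For arbitrary $a_1,\dots,a_n,b_1,\dots,b_n$, I would sum the representation over $k$, move the left factors across with the shift identity, rewrite the remaining terms by the cyclic relation, and discard every term of the form $\varphi_0(\cdot,\pi(u_\lambda))$ using the first tool. The two surviving groups then combine into a single term built from $\sum_k(a_kb_k-b_ka_k)$, giving the master identity $\sum_k\varphi_0(a_k,b_k)=\lim_\lambda\sum_i\varphi_0\bigl(x_i,y_i\sum_k[a_k,b_k]\bigr)$. The right-hand side depends only on $w:=\sum_k[a_k,b_k]$, so $\Phi(w):=\lim_\lambda\sum_i\varphi_0(x_i,y_iw)$ is a well-defined linear map on $[A,A]$ with $\varphi_0(a,b)=\Phi([a,b])$. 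Boundedness of the diagonal gives $\Vert\Phi(w)\Vert\le(\sup_\lambda\Vert u_\lambda\Vert)\Vert\varphi_0\Vert\,\Vert w\Vert$ (recognizing $\sum_i\varphi_0(x_i,y_iw)$ as the linearization of $\varphi_0$ evaluated at $u_\lambda\cdot w$), so $\Phi$ is continuous, and $\varphi=\Phi([\cdot,\cdot])+\Psi(\cdot\circ\cdot)$ follows.

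The main obstacle is the bookkeeping in this final collapse: the several applications of the cyclic relation and of the shift identity must be combined in the correct order, and one must respect that the individual sums need not converge on their own — only the grouped expression does — so the $\pi(u_\lambda)$-terms have to be isolated and sent to zero before passing to the limit. This is also precisely the step where genuine amenability is used: a bounded approximate identity alone gives the first tool, but the shift identity, and hence the whole collapse, requires the full approximate diagonal.
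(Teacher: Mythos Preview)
Your argument is correct and rests on the same construction as the paper's: both build the commutator part by evaluating the linearization of the bilinear map on $u_\lambda\cdot w$ for an approximate diagonal $(u_\lambda)$, and both identify $\Psi=\tfrac12 S$. The organization, however, differs in a way worth recording. The paper does \emph{not} subtract the symmetric part first; it works with $\varphi$ itself, fixes an ultrafilter $\mathcal U$ refining the order filter, defines $T(a)=\lim_{\mathcal U}\widehat\varphi(u_\lambda\cdot a)$ as a weak$^*$ limit in $X^{**}$, and then computes $T([a,b])=\varphi(a,b)-S(ba)$ directly from~\eqref{b}, setting $\Phi=(T-\tfrac12 S)|_{[A,A]}$ and observing only \emph{a posteriori} that $\Phi$ lands in $X$. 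By peeling off $\tfrac12 S(a\circ b)$ at the outset you reduce to the homogeneous cyclic identity, and this extra symmetry is exactly what lets your collapse show that $\lim_\lambda\widehat{\varphi_0}(u_\lambda\cdot w)$ exists as a genuine norm limit in $X$ for every $w\in[A,A]$; no ultrafilter and no passage through $X^{**}$ are needed. The paper's route is marginally shorter (one application of~\eqref{b} instead of two passes with the cyclic relation), while yours is a little more elementary and yields the stronger conclusion that the limit defining $\Phi$ is an honest net limit.
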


\begin{proof}
Let $(u_\lambda)_{\lambda\in\Lambda}$ be an approximate diagonal for $A$ of bound $C$,
and let $\mathcal{U}$ be an ultrafilter on $\Lambda$ refining the order filter.
On  account of the Banach-Alaoglu theorem, each bounded subset of the bidual $X^{**}$
of $X$ is relatively compact with respect to the weak$^*$-topology. Consequently,
each bounded net $(x_\lambda)_{\lambda\in\Lambda}$ in $X$ has a unique
limit in $X^{**}$ with respect to the weak$^*$-topology along the ultrafilter $\mathcal{U}$,
and we write $\displaystyle{\lim_\mathcal{U}} x_\lambda$ for this limit.

Let $\widehat{\varphi}\colon A\widehat{\otimes}A\to X$ be the
unique continuous linear map
such that
\[
\widehat{\varphi}(a\otimes b)=\varphi(a,b)
\]
for all $a,b\in A$.
We define $T\colon A\to X^{**}$ by
\[
T(a)=\lim_{\mathcal{U}}\widehat{\varphi}( u_\lambda\cdot a)
\]
for each $a\in A$.
For each $a\in A$, we have
\begin{equation}\label{e940}
\Vert\widehat{\varphi}(u_\lambda\cdot a)\Vert\le
\Vert\widehat{\varphi}\Vert\Vert u_\lambda\Vert\Vert a\Vert\le C
\Vert\varphi\Vert \Vert a\Vert \quad (\lambda\in\Lambda).
\end{equation}
Hence the net $(\widehat{\varphi}( u_\lambda\cdot a))_{\lambda\in\Lambda}$ is bounded
and the map $T$ is well-defined.
The linearity of the limit along an ultrafilter on a topological linear space gives the linearity of $T$.
Further, from~\eqref{e940} we deduce that $\Vert T(a)\Vert\le C \Vert\varphi\Vert \Vert a \Vert $ for each $a\in A$,
which gives the continuity of $T$.

We now claim that
\begin{equation}\label{e1049b}
\widehat{\varphi}(u\cdot a)=
\widehat{\varphi}(a\cdot u)+\widehat{\varphi}(\pi(u)\otimes a)-S(a\pi(u))
\end{equation}
for all $a\in A$ and $u\in A\widehat{\otimes}A$.
Of course, it suffices to prove~\eqref{e1049b} for the simple tensor products $u=b\otimes c$
with $b,c\in A$. Observe that
\eqref{b} can be written as
\[
\widehat{\varphi}(a\cdot(b\otimes c))-
\widehat{\varphi}((b\otimes c)\cdot a)+
\widehat{\varphi}(\pi(b\otimes c)\otimes a)=S(a\pi(b\otimes c))
\]
and this gives~\eqref{e1049b}.

For each $\lambda\in\Lambda$, we apply~\eqref{e1049b} with $u$ replaced by
$u_\lambda\cdot a$ and $a$ replaced by $b$  to get the following
\begin{equation*}
\begin{split}
\widehat{\varphi}(u_\lambda\cdot (ab))
& =
\widehat{\varphi}((u_\lambda\cdot a)\cdot b)\\
& =
\widehat{\varphi}(b\cdot u_\lambda\cdot a)+
\widehat{\varphi}(\pi(u_\lambda\cdot a)\otimes b)
-S(b\pi(u_\lambda\cdot a))\\
& =
\widehat{\varphi}(b\cdot u_\lambda\cdot a)+\widehat{\varphi}((\pi(u_\lambda) a)\otimes b)
-S(b\pi(u_\lambda) a).
\end{split}
\end{equation*}
We thus have
\begin{equation}
\begin{aligned}
\label{e1128b}
&\widehat{\varphi}(u_\lambda\cdot (ab))
-
\widehat{\varphi}(u_\lambda\cdot (ba))\\
=&
\widehat{\varphi}(b\cdot u_\lambda\cdot a)-
\widehat{\varphi}(u_\lambda\cdot (ba))+
\widehat{\varphi}((\pi(u_\lambda) a)\otimes b)-S(b\pi(u_\lambda) a)\\
=&
\widehat{\varphi}((b\cdot u_\lambda-u_\lambda\cdot b)\cdot a)+
\widehat{\varphi}((\pi(u_\lambda) a)\otimes b)-S(b\pi(u_\lambda) a).
\end{aligned}
\end{equation}

On account of~\eqref{ad1}, we have
$\lim_{\lambda\in\Lambda}(b\cdot u_\lambda-u_\lambda\cdot b)=0$
and therefore
$\lim_{\lambda\in\Lambda}(b\cdot u_\lambda-u_\lambda\cdot b)\cdot a=0$,
which implies that
$\lim_{\lambda\in\Lambda}\widehat{\varphi}((b\cdot u_\lambda-u_\lambda\cdot b)\cdot a)=0$.
Since $\mathcal{U}$ refines the order filter on $\Lambda$, it follows that
$\lim_{\mathcal{U}}\widehat{\varphi}((b\cdot u_\lambda-u_\lambda\cdot b)\cdot a)=0$.

According to~\eqref{ad2}, we have
$\lim_{\lambda\in\Lambda}\pi(u_\lambda)a=a$.
Hence $$\lim_{\lambda\in\Lambda}(\pi(u_\lambda)a)\otimes b=a\otimes b\quad\mbox{and}\quad
\lim_{\lambda\in\Lambda}b\pi(u_\lambda) a=ba.$$
The continuity of both $\widehat{\varphi}$ and $S$ then gives
$$\lim_{\lambda\in\Lambda}\widehat{\varphi}((\pi(u_\lambda)a)\otimes b)=\widehat{\varphi}(a\otimes b)\quad\mbox{and}\quad
\lim_{\lambda\in\Lambda}S(b\pi(u_\lambda) a)=S(ba).$$
Since $\mathcal{U}$ refines the order filter on $\Lambda$, we conclude that
$$\lim_{\mathcal{U}}\widehat{\varphi}((\pi(u_\lambda)a)\otimes b)=\widehat{\varphi}(a\otimes b) \quad\mbox{and}\quad
\lim_{\mathcal{U}}S(b\pi(u_\lambda)a)=S(ba).$$

We now prove that
\begin{equation}\label{e1652}
\varphi(a,b)=T([a,b])+S(ba) \quad (a,b\in A).
\end{equation}
Indeed,
by taking the limit along $\mathcal{U}$ in~\eqref{e1128b} we arrive at
\begin{align*}
T([a,b]) & =T(ab)-T(ba)=
\lim_\mathcal{U}\widehat{\varphi}(u_\lambda\cdot (ab))-
\lim_\mathcal{U}\widehat{\varphi}(u_\lambda\cdot (ba)) \\
& =
\lim_\mathcal{U}(\widehat{\varphi}(u_\lambda\cdot (ab))-
\widehat{\varphi}(u_\lambda\cdot (ba))) \\
& =
\lim_\mathcal{U}\widehat{\varphi}((b\cdot u_\lambda-u_\lambda\cdot b)\cdot a)+
\lim_\mathcal{U}\widehat{\varphi}((\pi(u_\lambda) a)\otimes b) \\
& \quad  {}-\lim_{\mathcal{U}}S(b\pi(u_\lambda)a) \\
& = \widehat{\varphi}(a\otimes b)-S(ba)=\varphi(a,b)-S(ba).
\end{align*}

Define $\Phi\colon[A,A]\to X^{**}$ and $\Psi\colon A\to X$ by
\[
\Phi(a)=(T-\tfrac{1}{2}S)(a) \quad (a\in [A,A])
\]
and
\[
\Psi=\tfrac{1}{2} S.
\]
Note that, on account of~\eqref{e1652},
$T$ maps $[A,A]$ into $X$ and therefore
$\Phi$ does not map merely into $X^{**}$, but actually into $X$.
From~\eqref{e1652} we see that $\varphi(a,b)=\Phi([a,b])+\Psi(a\circ b)$ for all $a,b\in A$.
\end{proof}

\begin{theorem}\label{tab}
Let $A$ be an amenable Banach algebra with property $\mathbb{B}$,
let $X$ be a Banach space, and
let $\varphi\colon A\times A\to X$ be a continuous bilinear map satisfying the condition:
\begin{equation*}
a,b\in A, \ ab=ba=0 \ \Rightarrow \ \varphi(a,b)=0.
\end{equation*}
Then  there exist continuous linear operators
$\Phi\colon [A,A]\to X$ and $\Psi\colon A\to X$ such that
\begin{equation*}
\varphi(a,b)=\Phi([a,b])+\Psi(a\circ b)
\end{equation*}
for all $a$, $b\in A$.
\end{theorem}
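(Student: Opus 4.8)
The plan is to obtain the theorem as an immediate consequence of the two preceding lemmas, chaining them together once the hypotheses of Lemma~\ref{l1519} are verified. The only point needing a remark is that an amenable Banach algebra automatically possesses a bounded approximate identity: since $A$ is amenable it has an approximate diagonal $(u_\lambda)_{\lambda\in\Lambda}$, and, as observed in the discussion preceding Lemma~\ref{l1519}, the net $(\pi(u_\lambda))_{\lambda\in\Lambda}$ is then a bounded approximate identity for $A$. Thus $A$ is a Banach algebra with property $\mathbb{B}$ that has a bounded approximate identity, and the standing assumption of the theorem is exactly the hypothesis on $\varphi$ imposed in Lemma~\ref{l1519}.

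With this in hand, the first step is to apply Lemma~\ref{l1519} to $\varphi$. The lemma then produces a continuous linear operator $S\colon A\to X$ satisfying~\eqref{eqfifi}, namely
\[
\varphi(ab,c)-\varphi(b,ca)+\varphi(bc,a)=S(abc) \quad (a,b,c\in A).
\]
The second step is to feed this $S$ into Lemma~\ref{l1520}. The identity~\eqref{eqfifi} just obtained is precisely condition~\eqref{b} required there, and $A$ is amenable by hypothesis, so Lemma~\ref{l1520} yields continuous linear operators $\Phi\colon[A,A]\to X$ and $\Psi\colon A\to X$ with $\varphi(a,b)=\Phi([a,b])+\Psi(a\circ b)$ for all $a,b\in A$. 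This is exactly the assertion of the theorem.

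Since all the analytic work has been carried out in the two lemmas, there is no genuine obstacle remaining: the intermediate object $S$ and the trilinear identity it satisfies are literally shared between the conclusion of Lemma~\ref{l1519} and the hypothesis of Lemma~\ref{l1520}, so the two results compose without any compatibility adjustment. The one substantive thing to confirm is the passage from amenability to a bounded approximate identity that licenses the use of Lemma~\ref{l1519}; after that, the proof is a direct concatenation.
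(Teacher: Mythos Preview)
Your proof is correct and matches the paper's own argument, which simply states that the theorem is a straightforward consequence of Lemmas~\ref{l1519} and~\ref{l1520}. Your explicit remark that amenability furnishes the bounded approximate identity needed for Lemma~\ref{l1519} is the only point worth spelling out, and you have done so.
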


\begin{proof}
A straightforward consequence of
Lemmas \ref{l1519} and \ref{l1520}.
\end{proof}

\begin{corollary}\label{cab}
If $A$ is an amenable Banach algebra with property $\mathbb{B}$, then $A$ is a zero Lie product determined Banach algebra.
\end{corollary}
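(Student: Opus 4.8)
The plan is to deduce the corollary directly from Theorem~\ref{tab}. First I would observe that the commutativity hypothesis~\eqref{B} is \emph{stronger} than the hypothesis of Theorem~\ref{tab}: if $a,b\in A$ satisfy $ab=ba=0$, then $[a,b]=ab-ba=0$, so $a$ and $b$ commute and~\eqref{B} forces $\varphi(a,b)=0$. Thus every continuous bilinear functional $\varphi\colon A\times A\to\mathbb{C}$ satisfying~\eqref{B} in particular fulfils the orthogonality condition of Theorem~\ref{tab} with $X=\mathbb{C}$, and since $A$ is by assumption an amenable Banach algebra with property $\mathbb{B}$, that theorem applies.

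Applying Theorem~\ref{tab} would yield continuous linear functionals $\Phi\colon[A,A]\to\mathbb{C}$ and $\Psi\colon A\to\mathbb{C}$ with $\varphi(a,b)=\Phi([a,b])+\Psi(a\circ b)$ for all $a,b\in A$. The remaining task is to show that the symmetric part $\Psi(a\circ b)$ vanishes identically. Here I would invoke~\eqref{B} once more, now applied to a single element: since $[a,a]=0$, we have $\varphi(a,a)=0$, whereas the displayed formula gives $\varphi(a,a)=\Phi(0)+\Psi(2a^2)=2\Psi(a^2)$. Hence $\Psi(a^2)=0$ for every $a\in A$. A standard polarization then removes the symmetric term: expanding the identity $0=\Psi\bigl((a+b)^2\bigr)=\Psi(a^2)+\Psi(a\circ b)+\Psi(b^2)$ shows $\Psi(a\circ b)=0$ for all $a,b\in A$.

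Consequently $\varphi(a,b)=\Phi([a,b])$ for all $a,b\in A$, with $\Phi$ a continuous (hence bounded) linear functional on the subspace $[A,A]$ of $A$. Finally I would extend $\Phi$ by the Hahn--Banach theorem to some $\tau\in A^*$; then $\varphi(a,b)=\tau([a,b])$, which is precisely~\eqref{Bl}. This proves that $A$ is a zero Lie product determined Banach algebra.

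I do not anticipate a genuine obstacle, as the argument is essentially formal once Theorem~\ref{tab} is available: the hypotheses of the corollary match those of the theorem verbatim. The only points requiring a little care are the twofold use of~\eqref{B} (first to secure the weaker orthogonality hypothesis, then to obtain the vanishing on the diagonal that kills $\Psi$) and the routine polarization step; both are elementary.
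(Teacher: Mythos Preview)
Your proof is correct and follows essentially the same route as the paper's own proof: both apply Theorem~\ref{tab} after observing that~\eqref{B} implies the orthogonality hypothesis, and then use $\varphi(a,a)=0$ to eliminate the symmetric part before extending $\Phi$ to all of $A$. The only cosmetic difference is that the paper phrases the elimination via skew-symmetry of $\varphi$ (adding the two resulting expressions), whereas you polarize the identity $\Psi(a^{2})=0$; these are equivalent elementary manipulations.
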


\begin{proof}
Let $\varphi\colon A\times A\to\mathbb{C}$ be a continuous bilinear functional satisfying~\eqref{B}.
If $a,b\in A$ are such that $ab=ba=0$, then $[a,b]=0$ and therefore $\varphi(a,b)=0$. Consequently,
the functional $\varphi$ satisfies the condition in Theorem~\ref{tab}. Hence there exist continuous
linear functionals $\tau_1\colon [A,A]\to\mathbb{C}$ and $\tau_2\colon A\to\mathbb{C}$ such that
\begin{equation}\label{ecab1}
\varphi(a,b)=\tau_1([a,b])+\tau_2(a\circ b) \quad (a,b\in A).
\end{equation}
Of course, the functional $\tau_1$ extends to a continuous linear functional on $A$.
On the other hand, if $a\in A$, then $[a,a]=0$ and therefore $\varphi(a,a)=0$.
Hence $\varphi$ is skew-symmetric and~\eqref{ecab1} yields
\begin{equation}\label{ecab2}
\begin{aligned}
\varphi(a,b)&=-\varphi(b,a)=-\tau_1([b,a])-\tau_2(b\circ  a) \\
&=\tau_1([a,b])-\tau_2(a\circ b) \quad (a,b\in A).
\end{aligned}
\end{equation}
Adding~\eqref{ecab1} and~\eqref{ecab2}, we obtain
\[
\varphi(a,b)=\tau_1([a,b]) \quad (a,b\in A),
\]
which shows that $\varphi$ is of the form~\eqref{Bl}.
\end{proof}

Since the group algebra $L^1(G)$ has property $\mathbb{B}$ for each locally compact group $G$ and further it is amenable exactly
in the case when $G$ is amenable,  the following result follows.

\begin{theorem}
Let $G$ be an amenable locally compact group. Then the group algebra $L^1(G)$ is a zero Lie product determined Banach algebra.
\end{theorem}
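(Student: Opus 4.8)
The plan is to obtain this as an immediate consequence of Corollary~\ref{cab}, which asserts that every amenable Banach algebra with property $\mathbb{B}$ is zero Lie product determined. Thus the entire task reduces to verifying that the group algebra $L^1(G)$ satisfies both hypotheses of that corollary: that it has property $\mathbb{B}$ and that it is amenable. Neither of these requires any new work, since both are known facts about group algebras that have been recorded in the earlier discussion of this section.

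First I would dispose of property $\mathbb{B}$. As noted in the paragraph introducing property $\mathbb{B}$, it was shown in~\cite{ABEV0} that group algebras $L^1(G)$ on arbitrary locally compact groups $G$ possess this property; crucially, this holds \emph{without} any amenability assumption on $G$. So property $\mathbb{B}$ is available for $L^1(G)$ unconditionally.

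Next I would address amenability. Here the classical theorem of B.\,E. Johnson enters: the Banach algebra $L^1(G)$ is amenable precisely when the locally compact group $G$ is amenable. Since we are assuming $G$ to be amenable, $L^1(G)$ is an amenable Banach algebra. With both hypotheses of Corollary~\ref{cab} now in hand, a direct application of that corollary yields that $L^1(G)$ is zero Lie product determined, completing the proof.

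I do not expect any genuine obstacle at this final stage, as the argument is purely a matter of assembling two standard ingredients and invoking Corollary~\ref{cab}. The substantive difficulty of the whole development lies earlier, namely in Lemma~\ref{l1520}, where the approximate diagonal of an amenable algebra together with an ultrafilter limit into the bidual $X^{**}$ is used to split a bilinear map governed by~\eqref{b} into a commutator part and a symmetric part, and in the passage (via Lemma~\ref{l1519} and property $\mathbb{B}$) from the vanishing condition $ab=ba=0$ to the identity~\eqref{eqfifi}. Once that machinery is established and funneled through Corollary~\ref{cab}, the statement for $L^1(G)$ is genuinely a corollary.
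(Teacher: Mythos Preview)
Your proposal is correct and matches the paper's approach exactly: the paper derives the theorem as an immediate consequence of Corollary~\ref{cab} by noting that $L^1(G)$ has property~$\mathbb{B}$ for every locally compact group $G$ (by~\cite{ABEV0}) and that $L^1(G)$ is amenable precisely when $G$ is amenable.
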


\end{document}